\crefname{hypothesis}{Hypothesis}{Hypotheses}
\title{$D-$Magic and Antimagic Labelings of Hypercubes\thanks{Submitted to the editors DATE.
\funding{This work was funded by}}}
\author{Palton Anuwiksa\thanks{Master Program in Mathematics, Institut Teknologi Bandung, Indonesia 
  (\email{anuwiksapalton@gmail.com}).} 
\and Akihiro Munemasa\thanks{Graduate School of Information Science, Tohoku University, Japan 
  (\email{munemasa@math.is.tohoku.ac.jp}).} 
\and Rinovia Simanjuntak\thanks{Combinatorial Mathematics Group, Institut Teknologi Bandung, Indonesia 
  (\email{rino@math.itb.ac.id}).}}
\newcommand*{\addFileDependency}[1]{
  \typeout{(#1)}
  \@addtofilelist{#1}
  \IfFileExists{#1}{}{\typeout{No file #1.}}
}
\newcommand*{\myexternaldocument}[1]{%
    \externaldocument{#1}%
    \addFileDependency{#1.tex}%
    \addFileDependency{#1.aux}%
}
\newtheorem{observation}{Observation}
\newtheorem{conjecture}{Conjecture}
\newtheorem{openproblem}[theorem]{Open Problem}
\DeclareMathOperator{\kernel}{Ker}
\newcommand{\allone}{\mathbf{1}}
\newcommand{\F}{\mathbb{F}}
\newcommand{\Z}{\mathbb{Z}}
\newcommand{\bs}{\boldsymbol{s}}
\newcommand{\bk}{\boldsymbol{k}}
\newcommand{\bu}{\boldsymbol{u}}
\newcommand{\bv}{\boldsymbol{v}}
\newcommand{\ba}{\boldsymbol{a}}
\newcommand{\nexteqv}{\displaybreak[0]\\ &\iff}
\newcommand{\nexteq}{\displaybreak[0]\\ &=}
\newcommand{\refby}[1]{&&\text{(by (\ref{#1}))}}
\begin{document}

\maketitle

\begin{abstract}
For a set of distances $D$, a graph $G$ of order $n$ is said to be \emph{$D-$magic} if there exists a bijection $f:V\rightarrow \{1,2, \ldots, n\}$ and a constant $k$ such that for any vertex $x$, $\sum_{y\in N_D(x)} f(y) =k$, where $N_D(x)=\{y|d(y,x)=j, j\in D\}$. 	

In this paper we shall find sets of distances $D$s, such that the hypercube is $D-$magic. We shall utilise well-known properties of (bipartite) distance-regular graphs to construct the $D-$magic labelings.
\end{abstract}

\begin{keywords}
  $D$-magic labeling, $D$-antimagic labeling, distance-regular graph, hypercube
\end{keywords}

\begin{AMS}
  05C12, 05C78
\end{AMS}

\section{Introduction}
We denote by $G=(V,E)$ a finite undirected simple graph of order $n$ and diameter $d$. For an integer $i$ ($0\leq i\leq d$), we define $A_i$ as the $i-$distance matrix of $G$. When $i=1$, the matrix $A_1$ is the adjacency matrix of $G$, and sometime it is denoted by simply $A$. For an integer $i$ ($0\leq i\leq d$) and a vertex $x$ in $G$, we define $G_i(x)$ as the set of all vertices at distance $i$ from $x$. The \textit{open neighborhood} of $x$ is $N(x)=G_1(x)$ and the \textit{closed neighborhood} of $x$ is $N[x]=G_0(x)\bigcup G_1(x)$. In general, for a set of distances $D\subseteq\{0,1, \ldots, d\}$, the \textit{$D-$neighborhood} of a vertex $x$ is $N_D(x)=\bigcup_{i\in D} G_i$. For other standard graph theoretic notations and definitions we refer to Diestel \cite{Diest}.

Magic squares are among the more popular mathematical recreations and in the early 1960s, Sedl\'{a}\v{c}ek  \cite{ref_Sed} asked whether the "magic" ideas could be applied to graph. He introduced a graph labeling where the edges of a graph are labeled with distinct real numbers such that the sum of edge-labels incident with each vertex equal to a constant, independent of the choice of vertex. It is obvious that the complete bipartite graph $K_{n,n}$ can be labeled by elements of a magic square of size $n$. This labeling was called \emph{magic labeling}, but then it becomes known as the \emph{vertex-magic edge labeling}. Since then, many variations of magic labelings have been defined, and the most recent was introduced by O'Neal and Slater in 2013 \cite{OS13}.

\begin{definition}
For a graph $G$ and a set of distances $D$, a bijection $f:V(G)\rightarrow \{1,2,\ldots,n\}$ is called a \emph{$D-$magic labeling} of $G$ if there exists a constant $k$ called the \emph{magic constant} such that for any vertex $x$, \emph{the weight of $x$}, $w(x)=\sum_{y\in N_D(x)} f(y) = k$. 

In the case that the weight $w(x)$ is distinct for every vertex $x$, $f$ is called	a \emph{$D-$antimagic labeling} of $G$. In particular, if the set of weights $\{w(x)\mid x\in V(G)\}$ constitutes an arithmetic progression starting at $\alpha$ with difference $\delta > 0$, then $f$ is called an \emph{$(\alpha,\delta)-D-$antimagic labeling} of $G$. 

Any graph admitting a $D-$magic (resp.\ $D-$antimagic, $(\alpha,\delta)-D-$antimagic) labeling is called a \emph{$D-$magic (resp.\ $D-$antimagic, $(\alpha,\delta)-D-$antimagic) graph}.

If $D=\{1\}$, a $D-$magic (resp.\ $D-$antimagic, $(\alpha,\delta)-D-$antimagic) labeling is known as a \emph{distance magic (resp.\ distance antimagic, $(\alpha,\delta)-$distance antimagic) labeling}, and if $D=\{0,1\}$, a $D-$magic (resp.\ $D-$antimagic, $(\alpha,\delta)-D-$antimagic) labeling is called a \emph{closed distance magic (resp.\ closed distance antimagic, $(\alpha,\delta)-$closed distance antimagic) labeling}.
\end{definition}

The following observation is a direct consequence of the $D-$magic labeling definition.
\begin{observation} \label{union}
Let $D_1$ and $D_2$ be two disjoint sets of distances. If a graph is both $D_1-$magic and $D_2-$magic then it is also $(D_1 \cup D_2)-$magic.
\end{observation}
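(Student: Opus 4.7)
The plan is to unpack the definitions directly: because distance from a fixed vertex is a well-defined function, the distance-shells $G_i(x)$ partition $V(G)\setminus\{x\}\cup\{x\}$, so the hypothesis $D_1\cap D_2=\emptyset$ translates immediately into a disjointness statement for the two $D$-neighbourhoods.

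First I would fix a labeling $f:V(G)\to\{1,\ldots,n\}$ that realises both the $D_1$-magic property (with constant $k_1$) and the $D_2$-magic property (with constant $k_2$); this is the natural reading of ``both $D_1$-magic and $D_2$-magic,'' and it is the only case in which the conclusion can hold, since the magic constant is a property of a labeling. Then I would observe that, for every vertex $x$, the sets $G_i(x)$ with $i\in\{0,1,\ldots,d\}$ are pairwise disjoint (each $y\in V(G)$ lies in exactly one shell, namely $G_{d(x,y)}(x)$). Consequently, the disjointness of $D_1$ and $D_2$ forces $N_{D_1}(x)\cap N_{D_2}(x)=\emptyset$, and the union decomposes as $N_{D_1\cup D_2}(x)=N_{D_1}(x)\sqcup N_{D_2}(x)$.

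With this disjoint-union decomposition in hand, the weight of $x$ with respect to $D_1\cup D_2$ splits as
\begin{equation*}
\sum_{y\in N_{D_1\cup D_2}(x)}f(y)=\sum_{y\in N_{D_1}(x)}f(y)+\sum_{y\in N_{D_2}(x)}f(y)=k_1+k_2,
\end{equation*}
which is independent of $x$. Hence $f$ itself witnesses that $G$ is $(D_1\cup D_2)$-magic with magic constant $k_1+k_2$.

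There is no real obstacle here; the only subtlety is making the interpretation precise, namely that the same bijection $f$ must serve as a $D_1$-magic and $D_2$-magic labeling. Once that is stated, the proof is a one-line manipulation based on the partition of $V(G)$ by distance from $x$, and no properties of $G$ beyond the definition of a $D$-neighbourhood are used.
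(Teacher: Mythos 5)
Your proof is correct and is exactly the argument the paper intends: the paper states this as an Observation with no written proof, calling it a direct consequence of the definition, and your unpacking via the disjointness of the distance shells $G_i(x)$ (hence $N_{D_1}(x)\cap N_{D_2}(x)=\emptyset$) and the resulting additivity of the weights, giving magic constant $k_1+k_2$, is precisely that direct consequence. Your interpretive caveat that the same bijection must witness both magic properties is the right reading and matches how the observation is actually applied later (a single labeling $l$ is shown to be $D$-magic for each piece), though strictly speaking it is not the ``only case in which the conclusion can hold,'' since some third labeling could in principle work.
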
 

In \cite{MRS03}, it was proved that if $G$ is an $r$-regular graph with order $n$ admitting a distance magic labeling, then the magic constant is $\frac{r(n+1)}{2}$. Using this result and the Handshaking Lemma, it is clear that every regular graph with odd degree is not distance magic. In 2004, Acharya, Rao, Singh, and Parameswaran conjectured that distance magic labelings do not exist for all hypercubes of order at least 4, including the ones with even degrees.
\begin{conjecture}\cite{RSP}
\label{conj:hypercube}
For any even integer $n \geq 4$, the $n$-dimensional hypercube $Q_n$ is not distance magic.
\end{conjecture}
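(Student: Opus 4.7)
The plan is to translate the existence of a distance magic labeling of $Q_n$ into a spectral condition on the adjacency matrix $A$ and then exploit the antipodal symmetry of the hypercube to rule it out. By the regularity argument cited above from \cite{MRS03}, any distance magic labeling of $Q_n$ must have magic constant $k=n(2^n+1)/2$. Writing $c=(2^n+1)/2$ and $g:=f-c\mathbf{1}$, the magic condition $Af=k\mathbf{1}$ rewrites as $Ag=\mathbf{0}$, while $2g$ must realize the odd integers $\{\pm 1,\pm 3,\dots,\pm(2^n-1)\}$ as a bijection on $V(Q_n)=\mathbb{F}_2^n$. The task therefore reduces to showing that no vector in $\ker A$ has this precise range.

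I would next diagonalize $A$ using the character basis of $\mathbb{F}_2^n$. Viewing $Q_n$ as the Cayley graph on $\mathbb{F}_2^n$ with connection set the standard basis, the characters $\chi_S(x)=(-1)^{\sum_{i\in S}x_i}$ for $S\subseteq\{1,\dots,n\}$ are eigenvectors of $A$ with eigenvalue $n-2|S|$. Hence $0$ is an eigenvalue precisely when $n$ is even, and $\ker A=\operatorname{span}\{\chi_S : |S|=n/2\}$ has dimension $\binom{n}{n/2}$. The existence of a distance magic labeling is thus equivalent to choosing scalars $c_S$ indexed by $n/2$-subsets $S$ so that $g=\sum_{|S|=n/2}c_S\chi_S$ evaluates to the prescribed multiset of half-odd integers.

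The next step is to exploit the antipodal involution $\sigma(x)=x+\mathbf{1}$. The auxiliary function $h(x):=f(x)+f(\sigma(x))-(2^n+1)$ lies in $\ker A$ and is $\sigma$-invariant; since $\chi_S\circ\sigma=(-1)^{|S|}\chi_S$, the $\sigma$-fixed subspace of $\ker A$ is trivial whenever $n/2$ is odd. For $n\equiv 2\pmod 4$ this forces the antipodal pairing $f(x)+f(\sigma(x))=2^n+1$ for every $x$. Combining this pairing with the coordinate-balance conditions $\sum_{x_i=0}f(x)=\sum_{x_i=1}f(x)=2^{n-2}(2^n+1)$, obtained by a standard double-count over each of the $n$ coordinate hyperplanes, I would derive a contradiction via a $\bmod\,4$ analysis of the odd values $2g(x)$, tracked across the level sets $W_j=\{x\in\mathbb{F}_2^n:|x|=j\}$.

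The main obstacle is the case $n\equiv 0\pmod 4$: here $|S|=n/2$ is even, so $\sigma$ no longer annihilates the mid-level kernel. I would attempt to enlarge the symmetry group by considering translations $x\mapsto x+s$ for carefully chosen $s$ of weight $n/2$, or to induct on $n$ using the fact that $Q_n$ decomposes as the Cartesian product of $Q_{n-2}$ and $Q_2$, lifting the balance and pairing constraints from smaller hypercubes. This final step is where I expect the proof to be most delicate, and where a novel combinatorial input---plausibly over the middle layer of the Boolean lattice of index sets $S$---will likely be required.
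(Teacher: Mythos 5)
Your plan founders on a point of fact before any of its technical steps are reached: the conjecture as stated is false, and the paper records this explicitly. By Theorem~\ref{QnDM} (combining \cite{GK13} and \cite{CFKR16}), $Q_n$ \emph{does} admit a distance magic labeling whenever $n\equiv 2\pmod 4$, so the ``contradiction via a $\bmod\,4$ analysis'' you propose for that case cannot exist. Indeed, the antipodal pairing $f(x)+f(x+\mathbf{1})=2^n+1$ that you correctly derive is not an obstruction but a genuine feature of the labelings that do exist: it is precisely the relation established in Theorem~\ref{n=2mod4} of the paper, where it is then exploited \emph{constructively} to produce $\{j,n-j\}$-magic labelings. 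Only the case $n\equiv 0\pmod 4$ of the conjecture is true, and that is exactly the case you leave open, so as written the proposal establishes nothing.

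That said, your spectral setup is sound and in fact closes the true case with one observation you missed. The condition $Af=k\mathbf{1}$ gives far more than membership of $h=f+f\circ\sigma-(2^n+1)$ in $\ker A$: decomposing $f=\sum_j f_j$ over the character eigenspaces $V_j$, the identity $\sum_j\theta_j f_j=k\mathbf{1}$ forces $f_0=(k/n)\mathbf{1}$ and $f_j=0$ for every $j\geq 1$ with $\theta_j=n-2j\neq 0$; hence $f-(k/n)\mathbf{1}$ lies entirely in the middle eigenspace $V_{n/2}$. Since $\sigma$ acts on $V_{n/2}$ by the scalar $(-1)^{n/2}$, for $n\equiv 0\pmod 4$ this yields $f\circ\sigma=f$, contradicting injectivity of $f$ because $\sigma$ is fixed-point-free. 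This is the full-space analogue of the paper's route, which works instead with the $(d+1)\times(d+1)$ tridiagonal intersection matrix $B$ (Lemmas~\ref{lem:KerB1} and~\ref{lem:KerB}, Theorem~\ref{n=2mod4}): there $\bs(x)-\bs(x')$ is a vector in $\ker B$ whose first and last entries are $l(x)-l(x')$ and $-(l(x)-l(x'))$, while the one-dimensional $\ker B$ is spanned by a vector whose last entry is $(-1)^{d/2}$ times its first; for $d\equiv 0\pmod 4$ this forces $l(x)=l(x')$, a contradiction. Either formulation works, but the correct target is nonexistence for $n\equiv 0\pmod 4$ only, not for all even $n\geq 4$.
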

The conjecture was proved to be true in \cite{CFKR16} for $n\equiv 0\pmod 4$. However, positive answers were instead obtained for $n\equiv 2\pmod 4$ \cite{GK13}; which lead us to the following theorem.
\begin{theorem}[{\cite{CFKR16,GK13}}] \label{QnDM}
$Q_n$ has distance magic labeling if and only if $n\equiv 2\pmod4$.
\end{theorem}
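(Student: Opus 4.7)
I would prove the two directions separately, in each case identifying $V(Q_n)$ with the group $\mathbb{F}_2^n$ so that $Q_n$ becomes a Cayley graph with generators $e_1, \ldots, e_n$ and its adjacency matrix $A$ is simultaneously diagonalized by the Fourier characters $\chi_S(x) = (-1)^{\sum_{i \in S} x_i}$ for $S \subseteq \{1, \ldots, n\}$, with $A\chi_S = (n - 2|S|)\chi_S$. By the standard regularity identity from \cite{MRS03}, the magic constant of any distance magic labeling of $Q_n$ must be $k = n(2^n+1)/2$, and integrality of $k$ already forces $n$ to be even; write $n = 2m$ and $c = (2^n+1)/2$.

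For the non-existence direction (assume $n \equiv 0 \pmod 4$, so $m$ is even): any distance magic labeling $f$ satisfies $Af = k\allone$, equivalently $f - c\allone \in \kernel A = \mathrm{span}\{\chi_S : |S| = m\}$, so the Fourier coefficients $\widehat{f}(S)$ vanish except for $|S| \in \{0, m\}$. I would then sum $f$ over a carefully chosen affine subspace $W \subseteq \mathbb{F}_2^n$ and compare two evaluations: the Fourier side gives $|W|c$ plus contributions from those $\chi_S$ ($|S|=m$) which are constant on $W$, while the combinatorial side is a sum of $|W|$ distinct labels from $\{1, \ldots, 2^n\}$. Choosing $W$ so that no $S$ of size $m$ yields a character constant on $W$ collapses the Fourier side to $|W|c = 2^{\dim W - 1}(2^n+1)$, an odd multiple of a power of $2$. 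For $m$ even, I expect this to contradict the parity of the sum of any $|W|$-element subset of $\{1, \ldots, 2^n\}$ modulo a suitable power of $2$. The main obstacle is selecting $W$ so that the Fourier simplification and the parity obstruction hold simultaneously.

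For the existence direction (assume $n \equiv 2 \pmod 4$, so $m$ is odd): I would construct an explicit labeling exploiting the antipodal involution $\tau(x) = x + \allone$, which is an automorphism of $Q_n$ satisfying $\tau(N(x)) = N(\tau(x))$. If $f(x) + f(\tau(x)) = 2^n + 1$ for every $x$, then
\begin{equation*}
\sum_{y \in N(x)} f(y) + \sum_{y \in N(\tau(x))} f(y) = n(2^n+1) = 2k,
\end{equation*}
so the task reduces to balancing the two neighborhood sums individually within each antipodal pair. I would parameterize candidate labelings through a linear map $\psi : \mathbb{F}_2^n \to \mathbb{F}_2^r$ together with a bijection of $\{1,\ldots,2^n\}$ onto the fibres of $\psi$, reducing the magic condition to a combinatorial identity on signed sums indexed by subsets of $[n]$. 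The condition $m$ odd is the precise feasibility criterion for this identity; this is the heart of the Gregor--Kova\v{r} construction in \cite{GK13}, and the main obstacle is matching the algebraic balance condition to the arithmetic progression structure of the label set $\{1, \ldots, 2^n\}$.
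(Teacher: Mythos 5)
Note first that the paper does not prove this theorem at all: it is imported from \cite{CFKR16,GK13}, and only the ``only if'' half is later re-derived internally (via Theorem \ref{n=2mod4}). Your proposal, read as a proof, has a genuine gap in each direction. For non-existence, your reduction is correct up to the point $f-c\allone\in\kernel A=\langle \chi_S : |S|=n/2\rangle$, but the rest of the plan --- summing over a ``carefully chosen'' affine subspace $W$ on which no weight-$n/2$ character is constant and then extracting a parity contradiction --- is not carried out; you yourself flag the choice of $W$ as the main obstacle, and no candidate $W$ or parity computation is exhibited. The frustrating part is that your own setup already finishes the argument in one line: when $n\equiv 0\pmod 4$ every $S$ with $|S|=n/2$ has even size, so $\chi_S(x+\allone)=(-1)^{|S|}\chi_S(x)=\chi_S(x)$; hence $g=f-c\allone$ is invariant under the antipodal map and $f(x+\allone)=f(x)$, contradicting bijectivity. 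That one-liner is exactly the specialization to $Q_n$ of the paper's Theorem \ref{n=2mod4}, where the kernel of $B$ forces $l(x')=a+(-1)^{d/2}l(x)$ and hence $(-1)^{d/2}=-1$, i.e.\ $d\equiv2\pmod4$.

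For existence, the antipodal pairing $f(x)+f(\tau(x))=2^n+1$ only shows that the two neighborhood sums at $x$ and $\tau(x)$ \emph{average} to $k$; the actual content of the theorem is making each sum individually equal to $k$, and your ``linear map $\psi$ plus a combinatorial identity on signed sums'' is never specified --- you explicitly defer to the Gregor--Kova\v{r} construction, so this direction is a citation rather than a proof (which, to be fair, is also all the paper does). If you want a self-contained construction in the style of this paper, adapt Lemma \ref{lem:146b} to the open neighborhood: a linear bijection $f(\bu)=M\bu$ is neighbor balanced iff every row of $M$ has exactly $n/2$ ones, and since $M\allone\neq 0$ forces the row sums to be odd over $\F_2$, nonsingularity requires $n/2$ odd, i.e.\ $n\equiv2\pmod4$; such a nonsingular $M$ exists, and Lemma \ref{lem:146c} with $D=\{1\}$ then yields the distance magic labeling $\zeta\circ f$ (shifted by $1$). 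That would turn your sketch into a complete and genuinely different proof of the existence direction.
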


For other set of distances $D$, there is only one known $D-$magic labelings as follow.
\begin{theorem}[{\cite{GK13}}] \label{j}
For every $n\equiv 2\pmod4$, there exist a $\{j\}$-distance magic labeling of the hypercube $Q_n$ for every odd $j$, $1\leq j\leq n$.
\end{theorem}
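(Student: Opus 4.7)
The plan is to show that \emph{any} $\{1\}$-magic (distance magic) labeling $f$ of $Q_n$ is automatically $\{j\}$-magic for every odd $j$ with $1\le j\le n$, so that Theorem~\ref{QnDM} immediately yields the desired labelings. No new labeling needs to be constructed.

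First, regard a labeling $f$ as a column vector indexed by $V(Q_n)=\F_2^n$; then $f$ is $\{j\}$-magic iff $A_j f = k_j \allone$. Since $Q_n$ is a distance-regular graph of diameter $n$, each distance matrix $A_j$ is a polynomial $p_j(A_1)$ in the adjacency matrix, uniquely determined by requiring $p_j(n-2i)=K_j(i;n)$ at each eigenvalue $n-2i$ of $A_1$, where $K_j(i;n)=\sum_{\ell=0}^j (-1)^\ell \binom{i}{\ell}\binom{n-i}{j-\ell}$ is the Krawtchouk polynomial.

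The key step is to prove that $p_j$ has zero constant term whenever $j$ is odd. Since $n$ is even, $0 = n-2(n/2)$ is an eigenvalue of $A_1$, so $p_j(0)=K_j(n/2;n)$. From the generating identity
\begin{equation*}
\sum_{j=0}^{n} K_j(n/2;n)\,z^j \;=\; (1-z)^{n/2}(1+z)^{n/2} \;=\; (1-z^2)^{n/2},
\end{equation*}
only even powers of $z$ appear on the right, hence $K_j(n/2;n)=0$ for every odd $j$. Therefore, for odd $j$, we may factor $p_j(t)=t\,q_j(t)$ for some polynomial $q_j$.

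Finally, suppose $f$ is distance magic, i.e.\ $A_1 f = k_1 \allone$ with $k_1=n(2^n+1)/2$. Because $A_1\allone=n\allone$, for odd $j$ we get
\begin{equation*}
A_j f \;=\; p_j(A_1)\,f \;=\; q_j(A_1)(A_1 f) \;=\; k_1\,q_j(A_1)\allone \;=\; k_1 q_j(n)\,\allone,
\end{equation*}
so $f$ is $\{j\}$-magic, with constant $k_1 p_j(n)/n = \binom{n}{j}(2^n+1)/2$ (using $p_j(n)=K_j(0;n)=\binom{n}{j}$). The main obstacle is the Krawtchouk vanishing $K_j(n/2;n)=0$ for odd $j$; once this identity is established via the above generating function, everything else is a formal consequence of $A_1 f$ being parallel to $\allone$.
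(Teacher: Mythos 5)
Your proposal is correct, and its overall architecture coincides with the paper's: the paper also constructs nothing new, but observes that Theorem~\ref{j} follows from Theorem~\ref{QnDM} together with Theorem~\ref{BDRG}, i.e.\ that the distance magic labeling $l$ of $Q_n$ satisfies $A_jl\in\langle\allone\rangle$ because $A_j$ is a polynomial in $A_1$ with no constant term. Where you diverge is in how that vanishing is justified. The paper's Theorem~\ref{BDRG} invokes the general fact that in a \emph{bipartite} distance-regular graph $\sum_{i\in D}A_i$ is an \emph{odd} polynomial in $A$ whenever $D$ consists of odd distances — a statement about all coefficients of even degree, asserted without computation and valid for every bipartite distance-regular graph. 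You instead prove only the minimal fact needed, namely $p_j(0)=0$, by evaluating the Krawtchouk polynomial at the eigenvalue $0=n-2(n/2)$ and using the generating identity $\sum_j K_j(n/2;n)z^j=(1-z^2)^{n/2}$; the factorization $p_j(t)=tq_j(t)$ then gives $A_jf=k_1q_j(n)\allone$ directly. Your route is more self-contained and explicit (it also yields the magic constant $\binom{n}{j}(2^n+1)/2$, which the paper does not record), but it is specific to $Q_n$ and depends on $0$ being an eigenvalue of $A_1$ (true here since $n$ is even); the paper's route is terser but generalizes to arbitrary bipartite distance-regular graphs and to arbitrary sets of odd distances. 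Both arguments are sound for the statement at hand.
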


In this paper we shall find sets of distances $D$s for which $D-$magic labelings for hypercubes exist (Section \ref{sec:3}). In order to do so, we shall develop more general results for distance-regular graphs, in particular for bipartite distance-regular graphs which are antipodal double covers (Section \ref{sec:2}).

\section{$D-$Magic Labelings for Distance-Regular Graphs}
\label{sec:2}

We shall use the definition of a distance-regular graph as stated in \cite{Brouwer1989}.
\begin{definition}
A connected graph $G$ of diameter $d$ is called \emph{distance-regular} if there are non-negative integers $b_i, c_i (0\leq i \leq d)$ such that for any two vertices $x$ and $y$ in $G$ at distance $i=d(x,y)$, there are precisely $c_i$ neighbours of $y$ in $G_{i-1}(x)$ and $b_i$ neighbours of $y$ in $G_{i+1}(x)$. Clearly, $G$ is regular with degree $r=b_0$, $b_d=c_0=0$, and $c_1=1$. The sequence $i(G)=\{r,b_1,\ldots,b_{d-1};1,c_2,\ldots,c_d\}$ is called the \emph{intersection array} of $G$; and the numbers $a_i, b_i$, and $c_i (i=0,\ldots,d)$, where $a_i=r-b_i-c_i$, are called the \emph{intersection numbers} of $G$.
\end{definition}

The spectrum of a distance-regular graph can be searched by considering a tridiagonal $(d+1)\times (d+1)$ matrix $B$ as stated in the following.
\begin{theorem} \cite{Bi96} \label{B}
If $G$ is a distance-regular graph of diameter $d$ and intersection array $\{r,b_1,\ldots,b_{d-1};1,c_2,\ldots,c_d\}$, then $G$ has $d+1$ distinct eigenvalues which are the eigenvalues of the tridiagonal $(d+1)\times(d+1)-$matrix
\[B=\left(
  \begin{array}{cccccc}
    0 & 1   & 0       & \cdots  &        & 0   \\
    r & a_1 & c_2     &         &        &     \\
    0 & b_1 & a_2     & c_3     &        &     \\
      &     & b_2     & a_3     & \ddots &     \\
      &     &         & \ddots  & \ddots & c_d \\
    0 &     & \cdots  &         & b_{d-1}& a_d \\
  \end{array}
\right).\]
\end{theorem}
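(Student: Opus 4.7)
The plan is to establish a three-term recurrence for the distance matrices $A_0, A_1, \ldots, A_d$ and then identify $B$ as the matrix representing left multiplication by $A_1$ on the space they span.

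First, I would derive the recurrence
\[A_1 A_i = b_{i-1} A_{i-1} + a_i A_i + c_{i+1} A_{i+1}\]
for $1 \le i \le d-1$, together with the boundary cases $A_1 A_0 = A_1$ and $A_1 A_d = b_{d-1} A_{d-1} + a_d A_d$. The $(x,y)$ entry of $A_1 A_i$ counts the vertices $z$ with $d(x,z)=1$ and $d(z,y)=i$; by the triangle inequality this vanishes unless $d(x,y) \in \{i-1, i, i+1\}$, and by distance-regularity the nonzero values depend only on $d(x,y)$ and equal precisely $b_{i-1}$, $a_i$, and $c_{i+1}$.

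Since $c_{i+1} > 0$ for $i < d$, the recurrence solves inductively for $A_{i+1}$ as a polynomial of degree $i+1$ in $A_1$; with $A_0 = I$ as base case, each $A_i$ is a polynomial of degree exactly $i$ in $A_1$. The matrices $A_0, \ldots, A_d$ have pairwise disjoint supports (and none is zero because $G$ has diameter $d$), so they are linearly independent. It follows that $I, A_1, A_1^2, \ldots, A_1^d$ are linearly independent, while $A_1^{d+1}$ lies in their span by the boundary relation at $i=d$. Hence the minimal polynomial of $A_1$ has degree exactly $d+1$, and because $A_1$ is real symmetric it has exactly $d+1$ distinct eigenvalues, proving the first assertion.

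It remains to match the spectrum with that of $B$. Rewriting the recurrence as an action on the basis $A_0, \ldots, A_d$ shows that left multiplication by $A_1$ on the span is represented by $B^T$: column $i$ of this matrix has entries $b_{i-1}, a_i, c_{i+1}$ at rows $i-1, i, i+1$. Since $B$ and $B^T$ share the same minimal polynomial, and since the minimal polynomial of this operator equals the minimal polynomial of $A_1$ (by applying any polynomial relation to the basis element $A_0 = I$), $B$ and $A_1$ have the same minimal polynomial and therefore the same eigenvalues.

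The main obstacle is bookkeeping at the boundaries: one must verify that the endpoint recurrences behave so that the minimal polynomial of $A_1$ has degree exactly $d+1$ (not smaller, using linear independence of the $A_i$, and not larger, using the collapse at $i=d$) and be explicit about transposing when converting left multiplication on distance matrices into the matrix $B$ as displayed.
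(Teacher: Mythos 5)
Your proof is correct and is essentially the standard adjacency-algebra argument: the paper itself gives no proof of this theorem, citing it to Biggs, and your derivation (three-term recurrence for the distance matrices, linear independence giving minimal polynomial of degree exactly $d+1$, and identification of $B^{T}$ as the matrix of multiplication by $A_1$ on the basis $A_0,\dots,A_d$) is precisely the proof found in that reference. Your attention to the transpose and to the boundary cases is handled correctly, so there is nothing to fix.
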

In subsequent theorems and proofs, we refer to the matrix $B$ of $G$ as the tridiagonal matrix in Theorem \ref{B}. 

For a vertex $x$ and a labeling of vertices $l$, we denote by $S_i(x) = \sum_{y\in G_i(x)}l(y)$, the sum of labels of all vertices in $G_i(x)$. It is clear that $S_0(x)=l(x)$. We define two vectors of $x$ as follow: the vector $\bs(x)$ as $(S_i(x))_{i=0}^d$ and the vector $\bk$ as $(|G_i(x)|)_{i=0}^d$. Now we are ready to provide necessary conditions for the existence of distance and closed distance magic labelings for distance-regular graphs.
\begin{lemma}\label{lem:KerB0}
Let $G$ be a distance-regular graph of diameter $d$, $x$ be a vertex in $G$, and $l$ be a labeling of vertices in $G$.
If $l$ is a distance magic labeling with magic constant $k'$, then
\[B\bs(x)=k'\bk.\]
If $l$ is a closed distance magic labeling with magic constant $k'$, then
\[(I+B)\bs(x)=k'\bk.\]
\end{lemma}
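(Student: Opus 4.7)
The plan is to verify the claimed matrix identity row by row. Fix a row index $i \in \{0,1,\ldots,d\}$; I will show that the $i$-th entries of both sides agree, using double counting together with the defining property of distance-regular graphs.

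For the distance magic case, the starting point is to apply the magic condition at every $z \in G_i(x)$ and sum:
\[
\sum_{z \in G_i(x)} \sum_{y \in N(z)} l(y) = k' |G_i(x)| = k' (\bk)_i.
\]
Swapping the order of summation on the left rewrites it as $\sum_{y \in V(G)} l(y) \cdot |\{z \in G_i(x) : z \sim y\}|$. By the triangle inequality, this inner cardinality vanishes unless $|d(x,y)-i|\leq 1$, so only the three spheres $G_{i-1}(x), G_i(x), G_{i+1}(x)$ contribute. Distance-regularity then evaluates the three non-trivial counts: a vertex at distance $i-1$ from $x$ has $b_{i-1}$ neighbors in $G_i(x)$, a vertex at distance $i$ has $a_i$ such neighbors, and a vertex at distance $i+1$ has $c_{i+1}$ such neighbors. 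Therefore the left side collapses to $b_{i-1}S_{i-1}(x) + a_i S_i(x) + c_{i+1} S_{i+1}(x)$, which is exactly the $i$-th entry of $B\bs(x)$ as read off from the tridiagonal band for $1 \leq i \leq d$.

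The two boundary rows need a brief separate check. At $i=0$ the sphere is the singleton $\{x\}$, so the summation becomes the single magic condition at $x$, giving $S_1(x) = k'$; this matches row $0$ of $B\bs(x) = k'\bk$ since that row of $B$ is $(0,1,0,\ldots,0)$ and $(\bk)_0=1$. At $i=d$ the set $G_{d+1}(x)$ is empty, so the phantom term $c_{d+1}S_{d+1}(x)$ vanishes and we recover row $d$ of $B\bs(x)$ verbatim.

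For the closed distance magic case the only change is that the magic condition at $z$ reads $l(z) + \sum_{y\in N(z)} l(y) = k'$. Summing over $z\in G_i(x)$ adds the extra term $S_i(x)$ to the previous left side, which on the matrix side corresponds to adding the diagonal identity $I$ to $B$. The boundary rows go through with the same trivial modification. The main care point is purely bookkeeping around the truncated rows $0$ and $d$; once those are pinned down, the argument for every interior row is a one-line double count driven entirely by the intersection numbers $b_{i-1}, a_i, c_{i+1}$.
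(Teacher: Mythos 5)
Your proposal is correct and follows essentially the same double-counting argument as the paper: sum the (closed) weights over the sphere $G_i(x)$ and use the intersection numbers $b_{i-1}, a_i, c_{i+1}$ to collapse the double sum into the $i$-th row of $B\bs(x)$ (resp.\ $(I+B)\bs(x)$). Your explicit verification of the boundary rows $i=0$ and $i=d$ is a small extra care point that the paper leaves implicit, but it is not a different method.
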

\begin{proof}
Suppose that $l$ is a distance magic labeling of $G$ with magic constant $k'$. For a vertex $x$ in $G$ and $1\leq i\leq d-1$, consider the sum of weights of vertices in $G_i(x)$,
\[\sum_{y \in G_i(x)}w(y)=\sum_{y\in G_i(x)} \sum_{z\in N(y)}l(z).\]
In this equation, the label of every vertex in $G_{i-1}(x)$ appears $b_{i-1}$ times, the label of every vertex in $G_i(x)$ appears $a_i$ times, and the label of every vertex in $G_{i+1}(x)$ appears $c_{i+1}$ times. Thus, the following holds.
\[k'k_i = S_{i-1}(x) b_{i-1} + S_i(x) a_i + S_{i+1}(x) c_{i+1}.\]
This proves that $k'\bk=B\bs(x)$. The second statement can be proved in a similar manner.
\end{proof}

A distance-regular graph $G$ of diameter $d$ is called an \emph{antipodal double cover} if $|G_d(x)|=1$ for some (and hence all) vertex $x$ in $G$. The unique vertex in $G_d(x)$ is called the \emph{antipode} of $x$ and denoted by $x'$ in what follows.
\begin{lemma}\label{lem:KerB1}
Let $G$ be a bipartite distance-regular graph which is an antipodal double cover.
If $\kernel B\neq 0$, then the diameter $d$ of $G$ is even. Moreover, in this case, $\kernel B$ has a basis of the form
\begin{equation}
\begin{pmatrix}1\\ 0 \\ \vdots \\ (-1)^{d/2}\end{pmatrix}.
\end{equation}
\end{lemma}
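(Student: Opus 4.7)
The plan is to exploit bipartiteness to decouple the kernel equations by parity, derive the parity of $d$ from the last row, and then invoke the antipodal double cover hypothesis to pin down the last entry.

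First I would use $a_i = 0$ (from bipartiteness) to reduce $B$ to a zero-diagonal tridiagonal matrix. Writing $B\bv = 0$ row by row then yields $v_1 = 0$ from row $0$, the two-term relations $b_{i-1} v_{i-1} + c_{i+1} v_{i+1} = 0$ for $1 \le i \le d-1$, and $b_{d-1} v_{d-1} = 0$ from row $d$. These middle relations decouple by the parity of the index: those with $i$ odd express $v_{2m+2}$ in terms of $v_{2m}$, which iterates down to $v_0$; those with $i$ even express $v_{2m+1}$ in terms of $v_{2m-1}$, which iterates down to $v_1 = 0$. Thus all odd-indexed entries vanish, while each even-indexed entry satisfies $v_{2m} = (-1)^m v_0 \prod_{j=0}^{m-1} (b_{2j}/c_{2j+2})$.

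The final-row equation forces $v_{d-1} = 0$ since $b_{d-1} > 0$. If $d$ were odd, then $d-1$ would be even and $v_{d-1}$ would be a nonzero scalar multiple of $v_0$, giving $v_0 = 0$ and hence $\bv = 0$, contradicting $\kernel B \neq 0$. Hence $d$ must be even, and normalizing $v_0 = 1$ produces a single candidate basis vector whose entries match the displayed form everywhere except possibly at position $d$; this also shows $\dim\kernel B \le 1$.

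The main remaining step, which I expect to be the real obstacle, is to show that the last entry evaluates to $(-1)^{d/2}$, i.e.\ that $b_0 b_2 \cdots b_{d-2} = c_2 c_4 \cdots c_d$. For this I would use the antipodal double cover hypothesis: the antipode map $x \mapsto x'$ is a graph automorphism, and for every vertex $y$ one has $d(x, y') = d - d(x, y)$. Consequently this map bijects $G_i(x)$ with $G_{d-i}(x)$; transporting the $b_i$ neighbors of $y \in G_i(x)$ lying in $G_{i+1}(x)$ across this bijection then yields $b_i = c_{d-i}$ for all $i$. Reindexing rewrites $b_0 b_2 \cdots b_{d-2}$ as $c_d c_{d-2} \cdots c_2$, which matches the denominator exactly, forcing $v_d = (-1)^{d/2}$ and completing the proof.
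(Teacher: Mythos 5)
Your proof is correct and follows essentially the same route as the paper's: both solve the three-term recurrence $B\bv=0$ using $a_i=0$ from bipartiteness together with the antipodal-double-cover relation $b_i=c_{d-i}$. The paper encodes $b_i=c_{d-i}$ silently in the displayed form of $B$ and simply asserts the outcome of the recursion, so your version just makes explicit the parity argument forcing $d$ even and the telescoping product $b_0b_2\cdots b_{d-2}=c_2c_4\cdots c_d$ that the paper leaves to the reader.
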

\begin{proof}
The tridiagonal matrix $B$ of $G$ as in Theorem \ref{B} is of the form
\begin{equation}
B= \begin{pmatrix}
	0 & 1      &           &       &       &  0\\
	r & 0      & c_2       &       &       &   \\
	  & c_{d-1}& 0         & c_3   &       &   \\
	  &        & c_{d-2}   & 0     &\ddots &   \\
	  &        &           & \ddots&\ddots &  r\\
	0 &        &           &       & 1     &  0
\end{pmatrix}.
\end{equation}
Let $\bu=(u_i)_{i=0}^d$ be an eigenvector of $B$, normalized to have $u_0=1$. Then $u_1=0$, and $u_i$ can be recursively determined by the condition $B\bu=0$, and we obtain $u_d=(-1)^{d/2}$.
\end{proof}

\begin{lemma}\label{lem:KerB}
Let $G$ be a distance-regular graph which is an antipodal double cover. If $G$ admits a distance (resp.\ closed) magic labeling, then $\kernel B$ (resp. $\kernel(I+B)$) has a basis of the form
\begin{equation}
\begin{pmatrix}1\\ \vdots \\ -1\end{pmatrix}.
\end{equation}
\end{lemma}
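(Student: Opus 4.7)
The plan is to produce, from the labeling $l$ itself, an explicit element of $\kernel B$ (resp.\ $\kernel(I+B)$) with the required first and last entries, and then observe that this element already spans the kernel. The starting point is Lemma \ref{lem:KerB0} applied to a vertex $x$ and to its antipode $x'$: since both $B\bs(x)$ and $B\bs(x')$ equal $k'\bk$, subtracting gives
\[
B\bigl(\bs(x) - \bs(x')\bigr) = 0,
\]
so $\bs(x) - \bs(x')$ is automatically a kernel element. What remains is to compute its shape and to verify that it is nonzero.

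The key structural input is the antipodal double cover property. Because $|G_d(x)| = 1$ with unique element $x'$, a short application of the triangle inequality yields $G_i(x) = G_{d-i}(x')$ for every $i$. Summing the labels on each side gives $S_i(x) = S_{d-i}(x')$, so $\bs(x')$ is simply $\bs(x)$ with its coordinates reversed. Consequently the $i$-th entry of $\bs(x) - \bs(x')$ is $S_i(x) - S_{d-i}(x)$; in particular the first coordinate is $l(x) - l(x')$ and the last coordinate is its negative.

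Nonvanishing is immediate: since $l$ is a bijection onto $\{1,\ldots,n\}$ and $x \neq x'$ (the diameter is at least one), we have $l(x) \neq l(x')$. Rescaling by $1/(l(x) - l(x'))$ produces a nonzero element of $\kernel B$ whose first entry is $1$ and last entry is $-1$. To conclude that this vector \emph{spans} $\kernel B$, I invoke Theorem \ref{B}: the $(d+1)\times(d+1)$ matrix $B$ has $d+1$ \emph{distinct} eigenvalues, so each eigenspace, and in particular $\kernel B$, is at most one-dimensional.

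The closed distance magic case is identical with $B$ replaced by $I+B$: Lemma \ref{lem:KerB0} now gives $(I+B)\bigl(\bs(x) - \bs(x')\bigr) = 0$, the coordinate-reversal identity for $\bs(x')$ is independent of $B$, and the eigenvalues of $I+B$ are those of $B$ shifted by one and therefore still distinct, giving $\dim\kernel(I+B) \leq 1$. The only genuinely substantive step in the whole argument is the antipodal identity $G_i(x) = G_{d-i}(x')$; once that is in hand, everything else follows from the lemmas already established, and no bipartiteness hypothesis is required (in contrast to Lemma \ref{lem:KerB1}, where bipartiteness forced the more restrictive shape $(1,0,\ldots,0,(-1)^{d/2})^\top$).
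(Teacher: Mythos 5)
Your proposal is correct and follows essentially the same route as the paper: apply Lemma \ref{lem:KerB0} at $x$ and at its antipode $x'$, subtract to get $\bs(x)-\bs(x')\in\kernel B$ (resp.\ $\kernel(I+B)$), and read off the first and last entries from $G_d(x)=\{x'\}$, $G_d(x')=\{x\}$. You in fact supply details the paper leaves implicit — nonvanishing via the bijectivity of $l$, and one-dimensionality of the kernel via the $d+1$ distinct eigenvalues from Theorem \ref{B} — and the only step stated a bit too casually (the full reversal $S_i(x)=S_{d-i}(x')$ "by the triangle inequality") is not actually needed beyond the coordinates $i=0,d$, which are immediate.
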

\begin{proof}
Suppose that $l$ is a distance magic labeling of $G$ with magic constant $k'$.
For a vertex $x$ in $G$, we have
\[B\bs(x)=B\bs(x')=k'\bk=\frac{k'}{r}B\bk,\]
where $r$ is the degree of $G$.
This implies $\bs(x)-\bs(x')\in \kernel B$ and gives the desired description of $\kernel B$. If $G$ admits a closed distance magic labeling $l$, then an analogous argument shows $\bs(x)-\bs(x')\in\kernel(I+B)$.
\end{proof}

The following theorem enables us to construct new $D-$magic or antimagic labelings for a (bipartite) distance-regular graph, provided that the graph is distance magic.
\begin{theorem} \label{BDRG}
Let $G$ be a distance-regular graph of diameter $d$ admitting a distance magic labeling $l$.
If $D$ is a non-empty subset of $\subseteq\{0,1,\dots,n\}$ then $l$ is either $D$-magic labeling or $(\alpha,\delta)-D-$antimagic
labeling for some $\alpha,\delta$. Moreover, if $G$ is bipartite, then $l$ is
$D-$magic for all non-empty $D\subseteq\{1,3,5,\dots\}$.
\end{theorem}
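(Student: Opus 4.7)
The plan is to exploit the linear constraint supplied by Lemma \ref{lem:KerB0}, namely $B\bs(x)=k'\bk$ for every vertex $x\in V(G)$, where $k'$ is the magic constant of $l$. Since any two solution vectors of this equation differ by an element of $\kernel B$, and Theorem \ref{B} tells us $B$ has $d+1$ pairwise distinct eigenvalues, we have $\dim\kernel B\le 1$. The whole proof then reduces to parametrizing the one-dimensional freedom in $\bs(x)$ by a single scalar that will turn out to be linear in $l(x)$.

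Concretely, assume $\kernel B\ne 0$ and let $\bu=(u_i)_{i=0}^d$ span it. Row $0$ of $B\bu=0$ gives $u_1=0$, and iterating the remaining rows shows that $u_0=0$ would propagate to $\bu=0$; hence $u_0\ne 0$ and we normalize so that $u_0=1$. Fixing any particular solution $\bs_0$ of $B\bs_0=k'\bk$, we can then write $\bs(x)=\bs_0+c_x\bu$ for some scalar $c_x$. Reading off the $0$-th coordinate yields $l(x)=S_0(x)=[\bs_0]_0+c_x$, so $c_x=l(x)-[\bs_0]_0$, a linear function of the label. Substituting into the $D$-weight,
\begin{equation*}
w_D(x)=\sum_{i\in D}S_i(x)=\sum_{i\in D}[\bs_0]_i+c_x\sum_{i\in D}u_i=\alpha+\delta\,l(x),
\end{equation*}
where $\alpha,\delta$ are constants independent of $x$ and $\delta=\sum_{i\in D}u_i$.

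From this identity the dichotomy follows immediately. If $\delta=0$, the weight $w_D$ is constant and $l$ is $D$-magic. If $\delta\ne 0$, the bijection $l\colon V\to\{1,\dots,n\}$ produces exactly the set $\{\alpha+\delta i:1\le i\le n\}$ of weights, an arithmetic progression of $n$ distinct values; after reversing direction when $\delta<0$, this exhibits $l$ as an $(\alpha',|\delta|)$-$D$-antimagic labeling. The corner case $\kernel B=0$ need not be treated separately, since for $n>1$ it would force $\bs(x)$, and in particular $S_0(x)=l(x)$, to be constant in $x$, contradicting that $l$ is a bijection.

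For the bipartite refinement, $a_i=0$ for every $i$, so the interior rows of $B\bu=0$ collapse to $b_{i-1}u_{i-1}+c_{i+1}u_{i+1}=0$. This recursion preserves parity, and starting from $u_0=1$, $u_1=0$ it forces $u_i=0$ at every odd index $i$. Therefore, whenever $D\subseteq\{1,3,5,\dots\}$, we have $\delta=\sum_{i\in D}u_i=0$, so $l$ is $D$-magic. The only substantive points to check are the normalization $u_0\ne 0$ and the odd-index vanishing from the bipartite form of $B$; these are the potential obstacles, but each is a short inspection of the rows of $B\bu=0$, and the rest of the argument is routine linear algebra.
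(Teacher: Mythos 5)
Your proof is correct, but it takes a genuinely different route from the paper's. The paper argues globally in the adjacency algebra: it writes $\sum_{i\in D}A_i=p(A)$ for a polynomial $p$, notes that $Al\in\langle\allone\rangle$ forces $A^jl\in\langle\allone\rangle$ for every $j\ge1$, and concludes $p(A)l\in p(0)\,l+\langle\allone\rangle$, so the magic/antimagic dichotomy is governed by the constant term $p(0)$; for bipartite $G$ and odd $D$ the polynomial $p$ is odd, hence $p(0)=0$. You instead argue locally through the intersection matrix: Lemma \ref{lem:KerB0} confines $\bs(x)$ to the affine line $\bs_0+\kernel B$, the tridiagonal recursion normalizes the kernel generator $\bu$ by $u_0=1$, $u_1=0$, and the weight becomes $\alpha+\delta\,l(x)$ with $\delta=\sum_{i\in D}u_i$. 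The two arguments secretly compute the same number: the recursion defining $\bu$ is the three-term recurrence for the polynomials $v_i$ with $A_i=v_i(A)$, evaluated at $0$, so $u_i=v_i(0)$ and your $\delta$ is exactly the paper's constant term $p(0)$. What your version buys is self-containedness (it reuses Lemma \ref{lem:KerB0} rather than importing the polynomial identity for $A_i$), an explicit formula for $\delta$ in terms of the intersection array, and the by-product that a distance-regular graph of order at least $2$ admitting a distance magic labeling must have $0$ as an eigenvalue of $B$ (your observation that $\kernel B=0$ would force $l$ to be constant); the paper's version is shorter. The steps you flagged do check out: $u_0=0$ propagates to $\bu=0$ through the rows of $B\bu=0$, and in the bipartite case $a_i=0$ makes the recursion parity-preserving, so $u_i=0$ for all odd $i$; the only configuration in which the last row $b_{d-1}u_{d-1}+a_du_d=0$ could kill the kernel is correctly excluded by your remark that $\kernel B\neq0$ whenever a distance magic labeling exists.
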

\begin{proof}
Regard the distance magic labeling $l$ as a vector indexed by $V(G)$. Then $Al\in\langle\allone\rangle$. Since $G$ is distance-regular, there exists a polynomial $p$ such that $p(A)=\sum_{i\in D}A_i$. If $p$ has constant term $d$, then
$p(A)l\in dl+\langle\allone\rangle$.
Thus, $l$ is a $D$-magic labeling or $(a,d)$-$D$-antimagic labeling for some $a$, according to $d=0$ or not.

Assume now that $G$ is bipartite, and that $D$ consists of odd positive integers. Then $\sum_{i\in D}A_i$ is an odd polynomial in $A$, and thus $\sum_{i\in D}A_il\in\langle\allone\rangle$.
\end{proof}

Note that Theorem \ref{j} is a direct consequence of Theorems \ref{QnDM} and \ref{BDRG}.

Now we are ready to prove the following theorem, which provides us many new $D-$magic labelings for a distance-regular graph which is an antipodal double cover, provided that the graph is either distance or closed distance magic. Additionally, the theorem also provides a necessary condition for the existence of a distance magic labeling for such a graph.
\begin{theorem} \label{n=2mod4}
Let $G$ be a distance-regular graph which is an antipodal double-cover.
If $l$ is a distance magic labeling or a closed distance magic of $G$ then
$l$ is a $\{j,d-j\}-$magic labeling for every $j$. Moreover, if $G$ is bipartite and
$l$ is a distance magic labeling, then
$d\equiv 2\pmod{4}$.
\end{theorem}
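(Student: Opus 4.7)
The plan is to exploit a natural reversal symmetry inherent to antipodal double covers. Denote by $J$ the $(d+1)\times(d+1)$ matrix reversing coordinates, $(J\bv)_i = v_{d-i}$. For any antipodal double cover the intersection identities $b_i = c_{d-i}$, $a_i = a_{d-i}$, and $k_i = k_{d-i}$ hold for $0 \le i \le d$, so I would begin by recording that $JBJ = B$ and $J\bk = \bk$.

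\textbf{$\{j,d-j\}$-magic part.} I would introduce the auxiliary vector $\phi(x) := \bs(x) + J\bs(x)$, whose $j$-th entry is precisely $S_j(x) + S_{d-j}(x)$, the weight of $x$ with respect to $\{j,d-j\}$. Combining Lemma \ref{lem:KerB0} with $BJ = JB$ and $J\bk = \bk$ yields
\[
B\phi(x) = B\bs(x) + JB\bs(x) = k'\bk + k'J\bk = 2k'\bk,
\]
which is independent of $x$. Hence $\phi(x) - \phi(y) \in \kernel B$ for every pair $x,y$. Since $J^2 = I$, we also have $J\phi(x) = \phi(x)$, so the difference $\phi(x)-\phi(y)$ is $J$-fixed. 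On the other hand, $B$ has simple spectrum (it shares the spectrum of the adjacency matrix of the distance-regular graph), so $\kernel B$ is at most one-dimensional. In the nontrivial case Lemma \ref{lem:KerB} supplies a generator $u$ with $u_0 = 1$ and $u_d = -1$; since $\kernel B$ is $J$-stable, $Ju = \lambda u$, and comparing zeroth coordinates forces $\lambda = u_d = -1$. A vector that is simultaneously $J$-fixed and a scalar multiple of $u$ (which is $J$-antifixed) must vanish, so $\phi(x) = \phi(y)$ for all $x,y$, giving the $\{j,d-j\}$-magic property. The closed-distance-magic case runs verbatim with $B$ replaced by $I+B$, which commutes with $J$ and enjoys the analogous basis description from Lemma \ref{lem:KerB}.

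\textbf{$d\equiv 2\pmod 4$ part.} Because $l$ is a bijection on $\{1,\dots,n\}$, the difference $l(x) - l(x')$ is nonzero, so the zeroth coordinate of $\bs(x) - \bs(x')$ is nonzero; Lemma \ref{lem:KerB0} together with this observation forces $\kernel B \neq 0$. Bipartiteness then activates Lemma \ref{lem:KerB1}, which tells us that $d$ is even and pins the last coordinate of the generator of $\kernel B$ at $(-1)^{d/2}$. Matching against Lemma \ref{lem:KerB}, which gives last coordinate $-1$, yields $(-1)^{d/2} = -1$, so $d/2$ is odd and $d \equiv 2 \pmod 4$.

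The main hurdle I anticipate is recognizing the reversal symmetry $J$ and checking $JBJ = B$ cleanly from the antipodal intersection identities; after that, the theorem unfolds as a brief linear-algebra exchange between the $J$-action on $\phi(x)$ and the two basis descriptions supplied by Lemmas \ref{lem:KerB} and \ref{lem:KerB1}.
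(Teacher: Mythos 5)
Your proposal is correct, and its first half takes a genuinely different route from the paper's. The paper argues pointwise: it deduces from the kernel description in Lemma \ref{lem:KerB} that $l(x)+l(x')$ is a constant (namely $|V(G)|+1$), and then uses the antipodal bijection $y\mapsto y'$ from $G_j(x)$ onto $G_{d-j}(x)$ to get $S_j(x)+S_{d-j}(x)=(|V(G)|+1)\,|G_j(x)|$, which has the small bonus of exhibiting the magic constant explicitly. You instead encode antipodality entirely in the intersection array via the reversal $J$ (the identities $b_i=c_{d-i}$, $a_i=a_{d-i}$, $k_i=k_{d-i}$, hence $JBJ=B$ and $J\bk=\bk$) and run a parity argument in the $(d+1)$-dimensional space: $\phi(x)=\bs(x)+J\bs(x)$ is $J$-fixed, its differences lie in $\kernel B$ by Lemma \ref{lem:KerB0}, and the at-most-one-dimensional kernel is $J$-anti-fixed by Lemma \ref{lem:KerB}, so the differences vanish. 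This buys you something: the paper's assertion that $l(x)+l(x')$ is constant does not follow from the kernel basis alone but needs the affine recursion $S_i(x)=\alpha_i+u_i\,l(x)$ extracted from $B\bs(x)=k'\bk$, a step the paper only makes explicit later; your argument sidesteps it, at the cost of having to verify the reversal identities for antipodal double covers (which are standard, and are implicitly used by the paper anyway when it identifies $G_{d-j}(x)$ with the antipodes of $G_j(x)$). For the $d\equiv2\pmod4$ part you and the paper do essentially the same thing, except that you invoke Lemma \ref{lem:KerB1} directly to get $u_d=(-1)^{d/2}$ where the paper re-derives this by recursively comparing entries; yours is the more economical use of the lemmas already proved. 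Both arguments share the harmless caveat at $j=d/2$, where the $\{j,d-j\}$-weight is $S_{d/2}(x)$ rather than $2S_{d/2}(x)$, but constancy of one gives constancy of the other.
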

\begin{proof}
From Lemma \ref{lem:KerB}, $\kernel B$ has a basis $\bu$ of the form $u_0=1=-u_d$. Then $l(x)+l(x')$ is a constant independent of $x$. Consequently, for every $x\in V(G)$,
\begin{align*}
S_j(x)+S_{d-j}(x)&=\sum_{y\in G_j(x)}(l(y)+l(y'))
\nexteq
\sum_{y\in G_j(x)}(|V(G)|+1)
\nexteq
(|V(G)|+1)|G_j (x)|.
\end{align*}
Therefore, $G$ is $\{j,n-j\}$-magic.

If $G$ is bipartite, then $\kernel B\neq0$ implies that $d$ must be even. Recursively comparing entries of
$B\bs(x)=k'\bk$,
there exist constants $a$ and $b$ such that
$l(x') = a + b l(x) $. More explicitly, $b=(-1)^{d/2}$.
Switching the role of $x$ and $x'$, we find $l(x) = a + b l(x') $.
This forces $b=-1$, and hence $d\equiv2\pmod4$.
\end{proof}

Theorem \ref{n=2mod4} provides an alternative proof for the non-existence of distance magic labelings for $n$-dimensional hypercube $Q_n$, when $n\not\equiv 2\pmod 4$, which was originally proved in \cite{CFKR16}. We could also use the theorem to prove the following result.
\begin{corollary}
Hadamard graphs are not distance magic.
\end{corollary}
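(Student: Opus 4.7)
The plan is to apply Theorem \ref{n=2mod4} directly, using the standard structural facts about Hadamard graphs. Recall that a Hadamard graph $H_n$ (associated with a Hadamard matrix of order $n$) is a bipartite distance-regular graph of diameter $4$ which is an antipodal double cover, with intersection array of the form $\{n, n-1, n/2, 1; 1, n/2, n-1, n\}$. These properties are classical and can be cited from Brouwer--Cohen--Neumaier.

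Given these facts, suppose for contradiction that a Hadamard graph $G$ admits a distance magic labeling $l$. Since $G$ is a bipartite distance-regular graph that is an antipodal double cover, Theorem \ref{n=2mod4} forces the diameter $d$ of $G$ to satisfy $d\equiv 2\pmod{4}$. But $d=4$ for every Hadamard graph, a contradiction. Hence no Hadamard graph admits a distance magic labeling.

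The main work therefore amounts to citing the structural facts about Hadamard graphs rather than a new computation; the nontrivial ingredient is Theorem \ref{n=2mod4} itself, which has already been established. Since no obstacle remains once the diameter and the bipartite antipodal double cover properties are acknowledged, the corollary follows at once.
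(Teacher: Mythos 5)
Your proof is correct and takes essentially the same approach as the paper: the paper's own proof simply notes that a Hadamard graph is a bipartite distance-regular antipodal double cover of diameter $4$ and leaves the appeal to Theorem \ref{n=2mod4} implicit, whereas you spell out the contradiction with $d\equiv 2\pmod 4$. No substantive difference.
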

\begin{proof} It is well known that a Hadamard graph is a bipartite distance-regular graph which is an antipodal double cover with diameter $4$.
\end{proof}

\section{$D-$Magic Labelings for Hypercubes}
\label{sec:3}

Recall that the $n-$hypercube $Q_n$ is a bipartite distance-regular graph which is an antipodal double cover with diameter $n$. As direct consequences of Observation \ref{union}, Theorems \ref{QnDM}, \ref{j}, \ref{BDRG} and \ref{n=2mod4}, we obtain the following sets of distances in which $D-$magic labelings exist for the hypercube $Q_n$, where $n\equiv 2\pmod4$.

\begin{theorem} \label{2mod4}
If $n\equiv 2\pmod4$ then there exists a $D$-magic labeling of $Q_n$ whenever $D$ is of the form
\begin{equation*}
E\cup\bigcup_{i\in I}\{i,n-i\},
\end{equation*}
where $E$ is a non-empty subset of $\{1,3,\ldots, n-1\}$, $I\subseteq\{0,1,\ldots,n/2\}$, and
\[E\cap\{i,n-i\}=\emptyset\quad(i\in I).\]
\end{theorem}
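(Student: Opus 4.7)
The plan is to show that a single distance magic labeling $l$ of $Q_n$ simultaneously realizes the magic condition for every ingredient piece of $D$, so that Observation~\ref{union} delivers the theorem with almost no extra work.

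I would begin by invoking Theorem~\ref{QnDM} (legitimate since $n\equiv 2\pmod 4$) to produce a distance magic labeling $l$ of $Q_n$. Next, because $Q_n$ is bipartite distance-regular, the bipartite clause of Theorem~\ref{BDRG} guarantees that this particular $l$ is already $E$-magic for every nonempty $E\subseteq\{1,3,\ldots,n-1\}$. Independently, because $Q_n$ is an antipodal double cover of diameter $n$, the first assertion of Theorem~\ref{n=2mod4}, applied to the same $l$, shows that $l$ is $\{i,n-i\}$-magic for every $i\in\{0,1,\ldots,n/2\}$.

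With these two facts in hand I would verify that $E$ and the various $\{i,n-i\}$ (for $i\in I$) are pairwise disjoint subsets of $\{0,1,\ldots,n\}$. Disjointness of $E$ from each $\{i,n-i\}$ is part of the hypothesis; for distinct $i,j\in I$ with $i<j\le n/2$, the chain $i<j\le n/2\le n-j<n-i$ immediately yields $\{i,n-i\}\cap\{j,n-j\}=\emptyset$. Once pairwise disjointness is secured, for each vertex $x$ the set $N_D(x)$ decomposes as the disjoint union of $N_E(x)$ and the $N_{\{i,n-i\}}(x)$ for $i\in I$, so
\[w_D(x)=w_E(x)+\sum_{i\in I}w_{\{i,n-i\}}(x),\]
and each term on the right is constant in $x$ by the preceding step. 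Hence $l$ is $D$-magic.

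I do not expect a real obstacle: the theorem is essentially a corollary of Observation~\ref{union} combined with Theorems~\ref{QnDM}, \ref{BDRG}, and \ref{n=2mod4}. The only point requiring a moment's care is that the union observation must be applied to one and the same labeling $l$ throughout, which is exactly what Theorems~\ref{BDRG} and~\ref{n=2mod4} deliver when fed the distance magic labeling of Theorem~\ref{QnDM}.
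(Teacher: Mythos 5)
Your proposal is correct and matches the paper's approach exactly: the paper derives Theorem~\ref{2mod4} as a direct consequence of Observation~\ref{union} together with Theorems~\ref{QnDM}, \ref{BDRG}, and \ref{n=2mod4}, which is precisely the combination you use. Your explicit check of pairwise disjointness and your remark that the same labeling $l$ must witness all the constituent magic conditions are worthwhile details that the paper leaves implicit.
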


Thus a natural question would be:
\begin{openproblem} \label{D_2mod}
"Are the sets $D$s in Theorem \ref{2mod4} the only $D$ for which $Q_n$, $n\equiv 2\pmod4$, is $D$-magic?"
\end{openproblem} 
If the answer of the question in Open Problem \ref{D_2mod} is positive, then by Theorem \ref{BDRG}, we obtain $(\alpha,\delta)-D-$antimagic labelings of $Q_n, n\equiv 2\pmod4$ for all $D$s which are excluded in Theorem \ref{2mod4}.

The rest of the section will be devoted on searching $D-$magic labelings for the hypercube $Q_n$, where $n\equiv 0\pmod4$. We shall denote by $\boldsymbol{e_i}$ the $i$th standard basis vector in $\F_2^n$. For a vector $\ba=(a_0,\dots,a_{n-1})\in\F_2^n$, we denote by $\zeta(\ba)$ the corresponding nonnegative integer: \[\zeta(\ba)=\sum_{i=0}^{n-1} a_i 2^i,\] where each $a_i\in\F_2$ is regarded as an element of $\Z$. We also denote by $\zeta:\F_2\to\Z$ the embedding obtained by setting $n=1$ in the above definition.

In the $n-$hypercube $Q_n$, we have
\[G_1(\bu)=\{\bu+\boldsymbol{e_i}\mid i\in\{1,\dots,n\}\}\quad(\bu\in\F_2^n).\]
The following definition is essential in finding a closed distance magic labeling of $Q_n$, as can be seen in Lemma \ref{lem:146b}.

\begin{definition}
A subset $A\subseteq\F_2^n$ is said to be \emph{balanced} if
\begin{equation}\label{146a}
|\{\ba\in A\mid a_i=1\}|=\frac{|A|}{2}\quad(\forall i\in\{1,\dots,n\}).
\end{equation}
For a subset $D\subseteq\{0,1,\dots,n\}$, a bijection $f:\F_2^n\to\F_2^n$ is said to be \emph{$D-$neighbor balanced} if $f(\bigcup_{i\in D}G_i(\bu))$ is balanced for every $\bu\in\F_2^n$. If $D=\{1\}$ (resp. $D=\{0,1\}$), then a $D-$neighbor balanced bijection is called a \emph{neighbor balanced} (resp. \emph{closed neighbor balanced}).
\end{definition}
\noindent Note that a subset $A\subseteq\F_2^n$ is balanced if and only if it is an orthogonal array of strength $1$ (see \cite{OA}).

\begin{lemma}\label{lem:146a}
Let $A$ be a balanced subset of $\F_2^n$. Then
\[\sum_{\ba\in A}\zeta(\ba)=\frac{|A|}{2}(2^n-1).\]
\end{lemma}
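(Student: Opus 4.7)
The plan is a direct double-counting argument using the definition of $\zeta$ and the balanced property of $A$. I would start by writing
\[
\sum_{\ba\in A}\zeta(\ba)=\sum_{\ba\in A}\sum_{i=0}^{n-1}a_i 2^i,
\]
and then swap the order of summation to obtain
\[
\sum_{\ba\in A}\zeta(\ba)=\sum_{i=0}^{n-1}2^i\sum_{\ba\in A}a_i=\sum_{i=0}^{n-1}2^i\,|\{\ba\in A\mid a_i=1\}|.
\]

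Next I would invoke the defining property \eqref{146a} of a balanced set, which says that for each coordinate $i$ exactly half of the vectors in $A$ have a $1$ in that position, so $|\{\ba\in A\mid a_i=1\}|=|A|/2$. Substituting this into the sum and pulling out the constant factor, I obtain
\[
\sum_{\ba\in A}\zeta(\ba)=\frac{|A|}{2}\sum_{i=0}^{n-1}2^i=\frac{|A|}{2}(2^n-1),
\]
using the standard geometric series identity for the final equality.

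There is essentially no obstacle here; the only minor subtlety is the off-by-one in indexing between the definition of $\zeta$, which ranges $i$ over $\{0,1,\dots,n-1\}$, and the definition of balanced, which ranges $i$ over $\{1,\dots,n\}$. This is purely a labeling convention, and I would simply note at the start of the proof that the two index sets are in obvious bijection so that \eqref{146a} applies coordinate-by-coordinate to each summand.
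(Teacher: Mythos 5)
Your proof is correct and follows essentially the same route as the paper's: swap the order of summation, apply the balanced condition coordinate-by-coordinate, and sum the geometric series. Your remark about the indexing mismatch between $\zeta$ (indices $0,\dots,n-1$) and the balanced condition (indices $1,\dots,n$) is a fair observation that the paper glosses over, but it changes nothing substantive.
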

\begin{proof}
We have
\begin{align*}
\sum_{\ba\in A}\zeta(\ba)&=
\sum_{i=0}^{n-1} \sum_{\ba\in A} a_i 2^i
\nexteq
\sum_{i=0}^{n-1} \frac{|A|}{2} 2^i
\refby{146a}
\nexteq
\frac{|A|}{2}(2^n-1).
\end{align*}
\end{proof}

\begin{lemma}\label{lem:146c}
Let $D\subseteq\{0,1,\dots,n\}$ and let $f:\F_2^n\to \F^n_2$ be a $D-$neighbor balanced bijection. Then the labeling $\zeta\circ f$ is a $D-$magic labeling of $Q_n$.
\end{lemma}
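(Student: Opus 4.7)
The plan is to compute the weight of an arbitrary vertex directly and show it is independent of the vertex. Writing $N_D(\bu)=\bigcup_{i\in D}G_i(\bu)$ and using that $\zeta\circ f$ is a bijection (as a composition of bijections), the weight of $\bu$ is
\[
w(\bu)=\sum_{\bv\in N_D(\bu)}\zeta(f(\bv))=\sum_{\ba\in f(N_D(\bu))}\zeta(\ba),
\]
where the last equality uses that $f$ is a bijection, so summing over $\bv\in N_D(\bu)$ is the same as summing over $\ba=f(\bv)$ ranging in $f(N_D(\bu))$.

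Next I would invoke the hypothesis that $f$ is $D$-neighbor balanced, which says precisely that the set $f(N_D(\bu))$ is balanced in the sense of \eqref{146a}. This lets me apply Lemma \ref{lem:146a} to that set, yielding
\[
w(\bu)=\frac{|f(N_D(\bu))|}{2}(2^n-1).
\]

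Finally, because $f$ is a bijection, $|f(N_D(\bu))|=|N_D(\bu)|=\sum_{i\in D}\binom{n}{i}$, which depends only on $n$ and $D$ and not on $\bu$, since $Q_n$ is distance-regular with $|G_i(\bu)|=\binom{n}{i}$. Hence $w(\bu)$ equals the constant $k=\tfrac{1}{2}(2^n-1)\sum_{i\in D}\binom{n}{i}$ for every $\bu\in\F_2^n$, which is the definition of a $D$-magic labeling.

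I do not anticipate any substantial obstacle here; the lemma is essentially a direct bookkeeping consequence of Lemma \ref{lem:146a} combined with the definition of $D$-neighbor balanced. The only minor point worth checking is that the codomain of $\zeta\circ f$ matches the label set required by the definition of $D$-magic: since $\zeta:\F_2^n\to\{0,1,\dots,2^n-1\}$ is a bijection, $\zeta\circ f$ is a bijection from $V(Q_n)$ onto $\{0,1,\dots,2^n-1\}$, which is magic in exactly the same sense (one can, if desired, add $1$ uniformly to every label to land in $\{1,\dots,2^n\}$; this merely shifts $k$ by the constant $\sum_{i\in D}\binom{n}{i}$ and preserves the magic property).
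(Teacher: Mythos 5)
Your proposal is correct and follows essentially the same route as the paper: rewrite the weight as a sum over $f(N_D(\bu))$, apply Lemma \ref{lem:146a} to that balanced set, and note that $|N_D(\bu)|$ is independent of $\bu$ by distance-regularity. Your closing remark about shifting labels from $\{0,\dots,2^n-1\}$ to $\{1,\dots,2^n\}$ addresses a small normalization issue the paper leaves implicit, but it does not change the substance of the argument.
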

\begin{proof}
Let $\bu\in\F_2^n$, then
\begin{align*}
\sum_{x\in N_D(\bu)}\zeta\circ f(x)&=
\sum_{y\in f(N_D(\bu))}\zeta(y)
\nexteq
\frac{|f(N_D(\bu))|}{2}(2^n-1)
&&\text{(by Lemma~\ref{lem:146a})}
\nexteq
\frac{|N_D(\bu)|}{2}(2^n-1).
\end{align*}
Since $|N_D(\bu)|$ is independent of the choice of $\bu$, we obtain $\zeta\circ f$ is a $D$-magic labeling of $Q_n$.
\end{proof}

\begin{lemma}\label{lem:146b}
Let $f:\F_2^n\to\F_2^n$ be a nonsingular linear transformation. $f$ is closed neighbor-balanced if and only if the matrix representation of $f$ with respect to the standard basis has constant row sum $(n+1)/2$.
\end{lemma}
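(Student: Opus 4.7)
The plan is to unwind the balance condition for the closed neighborhood, use translation-invariance of balance to eliminate $\bu$, and then read off the resulting condition directly from the matrix of $f$.

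First, let $M=(m_{ij})$ be the matrix of $f$ with respect to the standard basis, so that $f(\boldsymbol{e_j})$ is the $j$-th column and $(f(\boldsymbol{e_j}))_i=m_{ij}$. Since $N[\bu]=\bu+\{\mathbf{0},\boldsymbol{e_1},\dots,\boldsymbol{e_n}\}$ in $Q_n$, linearity gives $f(N[\bu])=f(\bu)+f(N[\mathbf{0}])$, and nonsingularity of $f$ ensures $f(N[\mathbf{0}])=\{\mathbf{0},f(\boldsymbol{e_1}),\dots,f(\boldsymbol{e_n})\}$ has $n+1$ distinct elements.

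Next, I would observe that balance is invariant under translation: for any set $A\subseteq\F_2^n$ of even cardinality and any $\bv\in\F_2^n$, translating by $\bv$ either preserves or complements the $i$-th bit over $A$ (depending on whether $v_i$ is $0$ or $1$), and in either case the count $|\{\ba\in A\mid a_i=1\}|$ equals $|A|/2$ if and only if the corresponding count for $\bv+A$ does. Hence $f$ is closed neighbor balanced if and only if the single set $f(N[\mathbf{0}])$ is balanced.

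Finally, since $\mathbf{0}$ contributes $0$ in every coordinate, $f(N[\mathbf{0}])$ is balanced exactly when, for every $i\in\{1,\dots,n\}$, we have $|\{j\mid(f(\boldsymbol{e_j}))_i=1\}|=(n+1)/2$. This count is exactly $\sum_{j=1}^n m_{ij}$, the $i$-th row sum of $M$ computed as an integer, yielding the desired equivalence. There is no substantial obstacle: the proof is essentially a chase through the definitions via linearity. The only subtlety is that balance requires an even-sized set, which here amounts to $n+1$ being even, consistent with $(n+1)/2$ being an integer in the statement.
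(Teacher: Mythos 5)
Your proof is correct and follows essentially the same route as the paper's: both unwind the definition of closed neighbor balance using linearity of $f$ and reduce it to counting the ones in each row of $M$. Your upfront use of translation-invariance of balance (so that only $f(N[\mathbf{0}])$ need be checked) is just a cleaner packaging of the paper's chain of equivalences quantified over $\alpha=(M\bu)_i\in\F_2$.
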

\begin{proof}
Let $M$ be the matrix representation of $f$ with respect to the standard basis, so that $f(\bu)=M\bu$ for every $\bu\in\F_2^n$.
Then for $i\in\{1,\dots,n\}$ and $\bu\in\F_2^n$, we have
\begin{align*}
&|\{\bv\in f({N}[\bu])\mid f(\bv)_i=1\}|
\\&=|\{j\mid f(\bu+\boldsymbol{e_j})_i=1\}|+\zeta(f(\bu)_i)
\\&=|\{j\mid (M\bu+M\boldsymbol{e_j})_i=1\}|+\zeta((M\bu)_i)
\nexteq
|\{j\mid M_{ij}=(M\bu)_i+1\}|+\zeta((M\bu)_i).
\end{align*}
Thus,
\begin{align*}
&|\{\bv\in f({N}[\bu])\mid f(\bv)_i=1\}|=\frac{n+1}{2}
\quad(\forall \bu\in\F_2^n)
\\&\iff
|\{j\mid M_{ij}=(M\bu)_i+1\}|+\zeta((M\bu)_i)=\frac{n+1}{2}
\quad(\forall \bu\in\F_2^n)
\nexteqv
|\{j\mid M_{ij}=\alpha+1\}|+\zeta(\alpha)=\frac{n+1}{2}
\quad(\forall \alpha\in\F_2)
\nexteqv
|\{j\mid M_{ij}=1\}|=\frac{n+1}{2}.
\end{align*}
Therefore,
\begin{align*}
&\text{$f$ is closed neighbor-balanced}
\\&\iff
|\{\bv\in f({N}[\bu])\mid f(\bv)_i=1\}|
=\frac{n+1}{2}\quad(\forall i\in\{1,\dots,n\},\;\forall
\bu\in\F_2^n)
\nexteqv
|\{j\mid M_{ij}=1\}|=\frac{n+1}{2}\quad(\forall i\in\{1,\dots,n\}).
\end{align*}
\end{proof}

It is known (see \cite[Sect.~III.2]{BI}) that the distance matrices $A_0,A_1,\ldots,A_n$ of $Q_n$ satisfy the following:
\[A_i=v_i(A_1)\quad(i=0,1,\dots,n),\]
where $\{v_i(x)\}_{i=0}^n$ is a sequence of polynomials defined by
\[v_0(x)=1,\quad v_1(x)=x,\]
and
\[xv_i(x)=(i+1)v_{i+1}(x)+(n-i+1)v_{i-1}(x)\quad(i=1,\dots,n-1).\]
It is also known that the eigenvalues of $A_1$ are
\[\theta_j=n-2j\quad(j=0,1,\dots,n).\]
The \emph{Krawtchouk polynomials} $\{K_i(y)\}_{i=0}^n$ are the sequence of polynomials defined by
\[K_i(j)=v_i(\theta_j)\quad(i,j=0,1,\dots,n).\]
Thus,
\begin{equation}\label{K}
(n-2j)K_i(j)=(i+1)K_{i+1}(j)+(n-i+1)K_{i-1}(j)
\end{equation}
It also can be shown by general theory that
\begin{equation}\label{eq:-1}
K_n(j)=(-1)^j\quad(j=0,1,\dots,n).
\end{equation}

Now we are ready to present the necessary and sufficient condition for the existence of a closed distance magic labeling of a hypercube.
\begin{theorem}\label{thm:1}
There exists a closed distance magic labeling of $Q_n$ if and only if $n\equiv1\pmod4$.
\end{theorem}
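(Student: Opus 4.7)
The plan is to handle the two implications separately: necessity will follow from Lemma \ref{lem:KerB} together with the Krawtchouk polynomial description of the eigenvectors of $B$, while sufficiency will be reduced via Lemmas \ref{lem:146b} and \ref{lem:146c} to constructing a specific invertible matrix over $\F_2$.

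For necessity, suppose $Q_n$ admits a closed distance magic labeling. Since $Q_n$ is a bipartite distance-regular antipodal double cover, Lemma \ref{lem:KerB} supplies a nonzero $\bu\in\kernel(I+B)$ with $u_0=1$ and $u_n=-1$. Reading $B\bu=-\bu$ off the tridiagonal $B$ for $Q_n$ (where $a_i=0$, $b_i=n-i$, $c_i=i$) reproduces exactly the three-term recurrence (\ref{K}) at a value $j$ with $n-2j=-1$. This forces $n$ to be odd with $j=(n+1)/2$, and identifies $\bu$ up to scalar as $(K_i((n+1)/2))_{i=0}^n$. Normalizing by $K_0((n+1)/2)=1$, the last entry is $K_n((n+1)/2)=(-1)^{(n+1)/2}$ by (\ref{eq:-1}); requiring it to equal $-1$ forces $(n+1)/2$ to be odd, i.e., $n\equiv 1\pmod 4$.

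For sufficiency, set $m=(n+1)/2$, which is odd. By Lemma \ref{lem:146c} applied with $D=\{0,1\}$ it suffices to exhibit a closed neighbor-balanced bijection, and by Lemma \ref{lem:146b} it is enough to produce an invertible $n\times n$ matrix $M$ over $\F_2$ whose rows all sum to $m$. The candidate I propose is the circulant $M$ whose first row is the indicator of $\{0,1,\dots,m-1\}$; every row sum is then $m$, so only invertibility remains to be verified.

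This invertibility is the main obstacle. Identifying $n\times n$ circulants over $\F_2$ with $\F_2[x]/(x^n+1)$, $M$ corresponds to $c(x)=1+x+\cdots+x^{m-1}=(x^m+1)/(x+1)$, and invertibility amounts to $\gcd(c(x),x^n+1)=1$. Because $n$ is odd, $x^n+1$ is separable over the algebraic closure of $\F_2$, and the common roots of $x^m+1$ and $x^n+1$ there are exactly the $\gcd(m,n)$-th roots of unity. A quick calculation using $\gcd(n,n+1)=1$ and the oddness of $n$ gives $\gcd(m,n)=1$, whence $\gcd(x^m+1,x^n+1)=x+1$. It only remains to check that $x+1\nmid c(x)$, equivalently $c(1)\neq 0$ in $\F_2$; but $c(1)=m$ is odd, so $c(1)=1$. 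Hence $M$ is invertible, and the labeling $\bv\mapsto\zeta(M\bv)+1$ (the added $1$ moves the labels into $\{1,\dots,2^n\}$ and preserves the magic property since every closed neighborhood has constant size $n+1$) is the desired closed distance magic labeling of $Q_n$.
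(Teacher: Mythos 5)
Your proof is correct and follows the paper's structure almost exactly. The necessity direction is verbatim the paper's argument: Lemma \ref{lem:KerB} forces $-1$ to be an eigenvalue of $B$ with normalized eigenvector $(K_i(p))_{i=0}^n$, $p=(n+1)/2$, and \eqref{eq:-1} forces $p$ odd. For sufficiency you also take the paper's route through Lemmas \ref{lem:146b} and \ref{lem:146c}, differing only in the choice of witness matrix: the paper uses the block matrix $\bigl[\begin{smallmatrix}1&0&\allone\\0&I&J\\0&J&I\end{smallmatrix}\bigr]$ and merely asserts its nonsingularity, whereas you use the circulant with first row the indicator of $\{0,\dots,m-1\}$, $m=(n+1)/2$, and actually prove invertibility via $\gcd\bigl((x^m+1)/(x+1),\,x^n+1\bigr)=1$, using $\gcd(m,n)=1$ and $c(1)=m=1$ in $\F_2$. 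That computation is sound, and your construction arguably has the advantage of coming with a complete verification; the trailing ``$+1$'' normalization of $\zeta\circ f$ into $\{1,\dots,2^n\}$ is a harmless cosmetic point the paper also glosses over.
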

\begin{proof}
Let $n=4m+1$, where $m$ is a positive integer. Observe that the matrix
\[M=\begin{bmatrix}1&0&\allone_{2m}\\
0&I_{2m}&J_{2m}\\
0&J_{2m}&I_{2m}\end{bmatrix}\]
is nonsingular over $\F_2$ and it has constant row sum $(n+1)/2=2m+1$. By Lemma~\ref{lem:146b}, there exists a closed neighbor-balance bijection $f$ of $\F_2^n$. By Lemma~\ref{lem:146c}, $\zeta\circ f$ is a closed distance magic labeling.

Conversely, suppose that there exists a closed distance magic labeling of $Q_n$. By Lemma \ref{lem:KerB} the kernel of $I+B$ must have a basis $u$ of the form $u_0=1=-u_n$. In particular, $B$ has eigenvalue $-1$, which forces $n$ to be odd. Let $n=2p-1$. Then the eigenvalues of $B$ are $\theta_j=n-2j$, $j=0,1,\dots,n$. Thus $\theta_p=-1$. The normalized eigenvector belonging to the eigenvalue $-1$ is $u=(K_i(p))_{i=0}^n$, and it satisfies $u_0=1$ and $u_n=K_n(p)=(-1)^p$ (by (\ref{eq:-1})). Thus, $p$ must be odd and hence $n\equiv1\pmod{4}$.
\end{proof}

Let $V_j=\kernel(A-\theta_jI)$ denote the eigenspace of $A_1$ corresponding to the eigenvalue $\theta_j$. Then the distance matrix $A_i$ has eigenvalue $K_i(j)$ on $V_j$, that is,
\[V_j\subseteq\kernel(A_i-K_i(j)I)\quad(i,j=0,1,\dots,n).\]
\begin{lemma}\label{lem:A}
\begin{enumerate}
\item
If $n$ is odd then
\[\kernel(A_0+A_1)\subseteq\kernel(A_{2i}+A_{2i+1})\quad(i=0,1,\dots,\frac{n-1}{2}).\]
\item
If $n\equiv1\pmod4$ then
\[\kernel(A_0+A_1)\subseteq\kernel(A_{i}+A_{n-i})\quad(i=0,1,\dots,\frac{n-1}{2}).\]
\end{enumerate}
\end{lemma}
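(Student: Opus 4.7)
The plan is to reduce both inclusions to identities for Krawtchouk polynomial values at a single index. Since $A_i$ acts as the scalar $K_i(j)$ on the eigenspace $V_j$, the operator $A_0 + A_1$ has eigenvalue $1 + (n-2j)$ on $V_j$, which vanishes precisely when $j = (n+1)/2$. Thus if $n$ is odd and we set $p = (n+1)/2$, we have $\kernel(A_0 + A_1) = V_p$. Statements (i) and (ii) then reduce to
\[
K_{2i}(p) + K_{2i+1}(p) = 0 \quad \text{and} \quad K_i(p) + K_{n-i}(p) = 0 \qquad (0 \leq i \leq (n-1)/2),
\]
respectively.

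For part (i), I would substitute $j = p$ into the three-term recurrence (\ref{K}); since $n - 2p = -1$, it simplifies to
\[
-K_i(p) = (i+1)\,K_{i+1}(p) + (n - i + 1)\,K_{i-1}(p).
\]
The base case $K_0(p) + K_1(p) = 1 + (-1) = 0$ is immediate. For the inductive step, assuming $K_{2i+1}(p) = -K_{2i}(p)$, apply the simplified recurrence at indices $2i+1$ and $2i+2$ to solve for $K_{2i+2}(p)$ and $K_{2i+3}(p)$ as scalar multiples of $K_{2i}(p)$; a short calculation yields
\[
K_{2i+2}(p) = -\frac{n - 2i - 1}{2i+2}\,K_{2i}(p), \qquad K_{2i+3}(p) = +\frac{n - 2i - 1}{2i+2}\,K_{2i}(p),
\]
whose sum is zero.

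For part (ii), the cleanest route is to exploit the antipodal map $\bu \mapsto \bu + \allone$ on $\F_2^n$, which yields the matrix identity $A_n A_i = A_{n-i}$ on the hypercube, since $d(\bu + \allone, \bv) = n - d(\bu, \bv)$ and every vertex has a unique antipode. Comparing the eigenvalues of both sides on $V_j$ and invoking (\ref{eq:-1}) then gives the symmetry
\[
K_{n-i}(j) = (-1)^j K_i(j).
\]
When $n \equiv 1 \pmod 4$, $p = (n+1)/2$ is odd, so $K_i(p) + K_{n-i}(p) = (1 - 1)\,K_i(p) = 0$, establishing the desired inclusion.

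I expect the induction in (i) to be the only piece requiring any real calculation; part (ii) is then essentially a one-line consequence of the $A_n A_i = A_{n-i}$ identity together with the parity of $p$.
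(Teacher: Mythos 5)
Your proposal is correct and follows essentially the same route as the paper: both identify $\kernel(A_0+A_1)$ with the eigenspace $V_p$ for $p=(n+1)/2$, prove (i) by induction on the Krawtchouk recurrence at $j=p$, and prove (ii) via the antipodal identity $A_iA_n=A_{n-i}$ (equivalently $K_{n-i}(j)=(-1)^jK_i(j)$) together with $K_n(p)=(-1)^p=-1$. The only cosmetic difference is that in (i) you solve explicitly for $K_{2i+2}(p)$ and $K_{2i+3}(p)$, whereas the paper relates the sum $K_{2i+3}(p)+K_{2i+2}(p)$ directly to $K_{2i}(p)+K_{2i+1}(p)$.
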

\begin{proof}
(i) Write $n=2p-1$. Then $\theta_p=-1$, so
\begin{equation}\label{Vp}
V_p=\kernel(A_0+A_1)
\end{equation}
The eigenvalue of $A_{2i}+A_{2i+1}$ on $V_p$ is
$K_{2i}(p)+K_{2i+1}(p)$, so it suffices to prove
\[K_{2i}(p)+K_{2i+1}(p)=0\quad(0\leq i<\frac{n}{2}).\]
We prove this by induction on $i$. The case $i=0$ is immediate
from \eqref{Vp}. By the recurrence \eqref{K}, we have
\begin{align*}
-K_{2i+1}(p)&=(2i+2)K_{2i+2}(p)+(2p-2i-1)K_{2i}(p),\\
-K_{2i+2}(p)&=(2i+3)K_{2i+3}(p)+(2p-2i-2)K_{2i+1}(p).
\end{align*}
Thus
\[(2i+3)(K_{2i+3}(p)+K_{2i+2}(p))=(2p-2i-1)(K_{2i}(p)+K_{2i+1}(p)).\]
This completes the inductive step.

(ii) Since $K_n(j)=(-1)^j\quad(j=0,1,\dots,n)$ and $n\equiv1\pmod4$, this implies $K_n(p)=-1$, which in turn implies $V_p\subseteq \kernel(A_0+A_n)$. Since $A_i+A_{n-i}=A_i(A_0+A_n)$, we obtain
\[V_p\subseteq\kernel(A_i+A_{n-i}).\]
The result then follows from \eqref{Vp}.
\end{proof}

\begin{theorem}\label{thm:2}
If $n\equiv1\pmod4$ then there exists a $D$-magic labeling of $Q_n$ whenever $D$ is of the form
\begin{equation}
\bigcup_{i\in I_1}\{2i,2i+1\}\cup\bigcup_{j\in I_2}\{j,n-j\},
\end{equation}
where $I_1,I_2\subseteq\{0,1,\dots,(n-1)/2\}$ and
\[\{2i,2i+1\}\cap\{j,n-j\}=\emptyset\quad(i\in I_1,\;j\in I_2).\]
\end{theorem}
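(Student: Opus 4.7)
The plan is to show that the closed distance magic labeling of $Q_n$ provided by Theorem \ref{thm:1} already serves as a $\{2i,2i+1\}$-magic labeling for each $i$ and a $\{j,n-j\}$-magic labeling for each $j$, and then to assemble the desired $D$-magic labeling via Observation \ref{union}.

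First, I would take a closed distance magic labeling $l$ of $Q_n$, which exists by Theorem \ref{thm:1} since $n\equiv 1\pmod 4$, and view it as a column vector indexed by $V(Q_n)$. Its defining equation reads $(A_0+A_1)l=c\allone$ for some constant $c$. Because $Q_n$ is $n$-regular, $\allone$ is itself an eigenvector of $A_0+A_1$ with eigenvalue $n+1$, so the vector $l_0:=l-\frac{c}{n+1}\allone$ lies in $\kernel(A_0+A_1)$.

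Next, I would apply Lemma \ref{lem:A}: part (i) gives $l_0\in\kernel(A_{2i}+A_{2i+1})$ for every $i\in\{0,1,\dots,(n-1)/2\}$, and part (ii) gives $l_0\in\kernel(A_j+A_{n-j})$ for every $j\in\{0,1,\dots,(n-1)/2\}$. Using $A_m\allone=k_m\allone$ with $k_m=|G_m(x)|$, this yields
\[
(A_{2i}+A_{2i+1})l=\tfrac{c(k_{2i}+k_{2i+1})}{n+1}\allone,\qquad (A_j+A_{n-j})l=\tfrac{c(k_j+k_{n-j})}{n+1}\allone,
\]
each of which states precisely that $l$ is $\{2i,2i+1\}$-magic, respectively $\{j,n-j\}$-magic.

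Finally, I would combine these partial properties using Observation \ref{union}. The sets $\{2i,2i+1\}$ are pairwise disjoint for distinct $i\in I_1$; the sets $\{j,n-j\}$ are pairwise disjoint for distinct $j\in I_2\subseteq\{0,1,\dots,(n-1)/2\}$, since $j+j'\leq n-1<n$; and the hypothesis supplies the cross-disjointness $\{2i,2i+1\}\cap\{j,n-j\}=\emptyset$. Iterating Observation \ref{union} over this finite disjoint union of distance sets then delivers a $D$-magic labeling. Since the substantive work has already been done in Theorem \ref{thm:1} and Lemma \ref{lem:A}, the proof is essentially algebraic assembly; the only point requiring real care is the disjointness bookkeeping needed to invoke Observation \ref{union}, which is exactly what the hypothesis enforces.
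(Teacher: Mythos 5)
Your proposal is correct and follows essentially the same route as the paper's own proof: both take the closed distance magic labeling from Theorem \ref{thm:1}, observe that $l\in\kernel(A_0+A_1)+\langle\allone\rangle$, push this through the inclusions of Lemma \ref{lem:A} to get the $\{2i,2i+1\}$- and $\{j,n-j\}$-magic properties, and assemble the result with Observation \ref{union}. Your version merely makes explicit the projection $l_0=l-\frac{c}{n+1}\allone$ and the disjointness bookkeeping, which the paper leaves implicit.
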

\begin{proof}
By Observation \ref{union}, it suffices to show that there exists a $D$-magic labeling of $Q_n$ for $D$ in
\begin{equation} \label{D}
\{\{2i,2i+1\}\mid 0\leq i< \frac{n}{2}\}\cup\{\{i,n-i\}\mid 0\leq i< \frac{n}{2}\}.
\end{equation}
By Theorem~\ref{thm:1}, there exists a closed distance magic labeling of $Q_n$. By Lemma~\ref{lem:A}, such a labeling is also a $D$-magic labeling for $D$ in \eqref{D}. Indeed, $(A_0+A_1)l\in\langle\allone\rangle$ implies
\begin{align*}
l&\in\kernel(A_0+A_1)+\langle\allone\rangle
\\&\subseteq\kernel(A_{2i}+A_{2i+1})+\langle\allone\rangle.
\end{align*}
Thus $(A_{2i}+A_{2i+1})l\in\langle\allone\rangle$, proving that $l$ is a $\{2i,2i+1\}$-magic labeling. Similarly, we can show that $l$ is a $\{j,n-j\}$-magic labeling.
\end{proof}

\begin{openproblem} \label{op:2}
"Are the sets $D$s in Theorem \ref{thm:2} the only $D$ for which $Q_n$, $n\equiv 2\pmod4$, is $D$-magic?"
\end{openproblem}
Open Problem \ref{op:2} should be true if equality holds in the inclusions in Lemma~\ref{lem:A}.

Additionally, in the next theorem we show that the known distance magic and closed distance magic labelings of $Q_n$ can be utilised to construct $D-$distance antimagic and $D-$closed distance antimagic labelings for $2Q_{n-1}$, the disjoint union of two copies of $Q_{n-1}$.

\begin{theorem}
If $n\equiv 2\pmod{4}$, then $2Q_{n-1}$ is $(\frac{n(2^n+1)}{2}-2^n,1)-$distance antimagic. If $n\equiv 1\pmod{4}$, then $2Q_{n-1}$ is $(\frac{(n+1)(2^n+1)}{2}-2^n,1)-$closed distance antimagic.
\end{theorem}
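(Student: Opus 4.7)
The plan is to view $Q_n$ as $Q_{n-1}\square K_2$, so that the vertex set of $Q_n$ can be identified with the vertex set of $2Q_{n-1}$, and the edge set of $Q_n$ is the edge set of $2Q_{n-1}$ together with a perfect matching $M=\{\{x,\tilde x\}\}$, where $\tilde x$ denotes the partner of $x$ obtained by toggling the last coordinate. The involution $x\mapsto \tilde x$ is then a bijection on $V(Q_n)$, and that bijection is the engine of the entire proof.

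For the first claim ($n\equiv 2\pmod4$), I would take a distance magic labeling $l$ of $Q_n$ guaranteed by Theorem~\ref{QnDM}. Because $Q_n$ is $n$-regular on $2^n$ vertices, its magic constant equals $k=\frac{n(2^n+1)}{2}$ by the formula cited after Conjecture~\ref{conj:hypercube}. Regarding $l$ as a labeling of $2Q_{n-1}$, the weight of a vertex $x$ in $2Q_{n-1}$ is obtained from its $Q_n$-weight by removing the contribution of its matching partner, namely
\[
w_{2Q_{n-1}}(x)=\sum_{y\in N_{Q_n}(x)}l(y)-l(\tilde x)=k-l(\tilde x).
\]
As $x$ ranges over $V(2Q_{n-1})=V(Q_n)$, so does $\tilde x$, and $l(\tilde x)$ runs bijectively through $\{1,2,\dots,2^n\}$. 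Hence the weights run bijectively through the arithmetic progression $\{k-2^n,k-2^n+1,\dots,k-1\}$, which starts at $\frac{n(2^n+1)}{2}-2^n$ with common difference $1$, as required.

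The second claim ($n\equiv 1\pmod4$) follows by the identical mechanism, this time starting from a closed distance magic labeling $l$ of $Q_n$ provided by Theorem~\ref{thm:1}. Summing the closed weights over all vertices of $Q_n$ yields the closed magic constant $k'=\frac{(n+1)(2^n+1)}{2}$, since each label $l(x)$ is counted $1+n$ times (once for $x$ itself and once for each of its $n$ neighbours). Removing the matching edge again contributes exactly $-l(\tilde x)$ to the closed weight, so
\[
w_{2Q_{n-1}}[x]=k'-l(\tilde x),
\]
and the same bijectivity argument gives an $(\alpha,1)$-closed distance antimagic labeling starting at $\alpha=\frac{(n+1)(2^n+1)}{2}-2^n$.

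There is no genuine obstacle here: the argument is essentially a bookkeeping observation that deleting a perfect matching from a (closed) distance magic labeling converts the constant weights into the values $k-l(\tilde x)$, and the involution $x\mapsto\tilde x$ guarantees the shifted labels still form a complete system of residues. The only point worth being careful about is the computation of the closed magic constant in the second case (using regularity to double-count), and the verification that the constants $\alpha$ in the statement agree with the formulas $k-2^n$ and $k'-2^n$ obtained above.
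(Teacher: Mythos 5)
Your proof is correct and follows essentially the same route as the paper: the paper writes the adjacency matrix of $Q_n$ in block form $\left(\begin{smallmatrix}A^{(n-1)}&I\\I&A^{(n-1)}\end{smallmatrix}\right)$ and subtracts the off-diagonal identity blocks (your perfect matching $M$), obtaining exactly your identity $w_{2Q_{n-1}}(x)=k-l(\tilde x)$ in vector form. Your element-wise presentation, the bijectivity argument, and the computation of the constants $k=\frac{n(2^n+1)}{2}$ and $k'=\frac{(n+1)(2^n+1)}{2}$ all match the paper's intent.
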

\begin{proof}
The adjacency matrix $A^{(n)}$ of $Q_n$ is of the form
\[\begin{pmatrix}A^{(n-1)}&I\\I&A^{(n-1)}\end{pmatrix},\]
where $A^{(n-1)}$ is the adjacency matrix of $Q_{n-1}$.

Suppose first $n\equiv 2\pmod4$. Then by Theorem \ref{QnDM}, $Q_n$ has a distance magic labeling $l$. Partitioning the vector $l$ into two equal parts $l^{(1)}$ and $l^{(2)}$,
we find
\[\begin{pmatrix}A^{(n-1)}&0\\0&A^{(n-1)}\end{pmatrix}l
\in\begin{pmatrix}l^{(2)}\\ l^{(1)}\end{pmatrix}+\langle\allone\rangle.\]
This implies that $l$ is a
$(\frac{n(2^n+1)}{2}-2^n,1)$-distance antimagic labeling.
Therefore, $2Q_{n-1}$ is $(\frac{n(2^n+1)}{2}-2^n,1)-$distance antimagic.

Next suppose $n\equiv 1\pmod{4}$. Then Theorem~\ref{thm:1} implies that there exists a closed distance magic labeling $l$ of $Q_n$. If we regard $l$ as a labeling of $2Q_{n-1}$, then it is easy to see that $l$ is a $(\frac{(n+1)(2^n+1)}{2}-2^n,1)-$closed distance antimagic labeling, by a similar argument as the previous case.
\end{proof}




\end{document}